\documentclass[12pt]{article}

\usepackage{amssymb,amsmath,amsfonts,amssymb}
\usepackage{graphics,graphicx,color,hhline}
\usepackage{bbm} 
\usepackage{euscript}
\usepackage{mathtools}

\usepackage{authblk}

\def\@abssec#1{\vspace{.05in}\footnotesize \parindent .2in 
{\bf #1. }\ignorespaces} 

\graphicspath{{/EPSF/}{Figures/}}   

\setlength\textwidth{37.2pc}
\setlength\textheight{56pc}
\setlength\topmargin{-12pt} 
\addtolength\oddsidemargin{-1.2cm}
\addtolength\evensidemargin{-1.2cm}
\addtolength\topmargin{-1.5cm}

\newtheorem{theorem}{Theorem}[section]
\newtheorem{lemma}[theorem]{Lemma}

\def \Rm {\mathbb R}

\def \NN {\mathbb N}

\newcommand{\eps}{\varepsilon}

\newcommand{\be}{\begin{equation}}
\newcommand{\ee}{\end{equation}}
\newcommand{\bea}{\begin{eqnarray}}
\newcommand{\eea}{\end{eqnarray}}
\newcommand{\bee}{\begin{eqnarray*}}
\newcommand{\eee}{\end{eqnarray*}}

\newcommand{\pdr}[2]{\dfrac{\partial{#1}}{\partial{#2}}}

\newcommand{\bu}{\mathbf u} \newcommand{\bv}{\mathbf v}
\newcommand{\bw}{\mathbf w}

\newcommand{\bx}{\mathbf x} 
 
\newcommand{\bD}{\mathbf D} 
\newcommand{\bG}{\mathbf G} \newcommand{\bH}{\mathbf H}
\newcommand{\bW}{\mathbf W}
\newcommand{\bK}{\mathbf K} 
 
\newcommand{\bR}{\mathbf R} \newcommand{\bS}{\mathbf S}
\newcommand{\bV}{\mathbf V}
\newcommand{\bE}{\mathbf E}
\newcommand{\bB}{\mathbf B}

\def\fref#1{{\rm (\ref{#1})}}

\newcommand{\bU}{\mathbf U}

\newcommand{\cout}[1]{}

\begin{document}
\title{Instantaneous time mirrors and wave equations with  time-singular coefficients}
\author[,1]{Olivier Pinaud \footnote{pinaud@math.colostate.edu}}
 \affil[1]{Department of Mathematics, Colorado State University, Fort Collins CO, 80523}

 \maketitle

 \begin{abstract}

   We study in this work the concept of instantaneous time mirrors that were recently introduced in the physics literature in \cite{Fink-nphys}. Instantaneous time mirrors offer a new method for time reversal with a simplified experimental setup compared to classical techniques. At the mathematical level, instantaneous time mirrors are modeled by singularities in the time variable in the coefficients of a wave equation, and a prototype of such singularity is a Dirac delta. Our main goal in this work is to obtain refocusing estimates for the wavefield that quantify the quality of time reversal. This amounts to analyze the wave equation with Dirac-type singularities and develop a proper regularity theory as well as derive uniform estimates.
   
   \end{abstract}

\section{Introduction}

This work is concerned with the mathematical analysis of \textit{Instantaneous Time Mirrors (ITM)} that were introduced recently in \cite{Fink-nphys}, and which offer a new avenue for time reversal. The latter is a technique developed by M. Fink and collaborators in the nineties, see e.g. \cite{fink-PT}, based on the idea that if time is reversed in the wave equation $\partial_t^2 u=\partial_x^2u$  for instance, that is $t$ becomes $-t$, then the equation is not changed. This fact was successfully exploited in order to focus waves: suppose that (i) a signal is emitted from a point source at a time $t=0$ and propagates according to some time-reversible equations (e.g. a linear hyperbolic system without absorption such as acoustic, elastic, or electromagnetic wave equations), then (ii) is recorded and time-reversed at time $T$ (i.e. what is recorded last is sent back first), and finally (iii) is re-emitted for back-propagation during a time $T$. Then, at time $2T$, the signal refocuses at the location of the point source. The quality of refocusing  depends on various factors, such as how much of the signal was recorded during reversal and how heterogeneous is the underlying medium of propagation.

Time reversal has found many important applications in medical imaging, non-destructive testing, and telecommunications for instance. Its main practical difficulty is the measurement/reversal process: detectors (transducers) must operate both as recorders and emitters, and must occupy a sufficiently large domain of space for sharp focusing. This is often difficult to realize.

The groundbreaking nature of \cite{Fink-nphys} is that time reversal can actually be achieved without any measurements and a complex experimental apparatus. The main idea is that sudden and strong perturbations in the medium of propagation generate back-propagating waves that refocus at the emission point. The procedure is referred to in \cite{Fink-nphys} as creating an instantaneous time mirror, and opens interesting new perspectives as on the one hand the experimental procedure is simplified, and on the other some situations where time reversal was not feasible (e.g. quantum systems where phases are difficult to measure and the state of the system is modified by measurements) are amenable to reversal provided the background can be controlled. Note that ITM fall into the context of time refraction and time reflection where energy is in general not conserved, contrary to spatial refraction/reflection, see \cite{mendona}.

At the mathematical level, ITM are modeled by time-singular coefficients in hyperbolic equations. The prototype of such singularity is a delta function at a given time $T$, that represents the (strong) perturbation due to the ITM at $T$. In \cite{Fink-nphys}, this is the sudden and strong shaking of a water tank that changes abruptly the wave velocity of surface waves. A signal emitted at time $t=0$, perturbed by an ITM at $t=T$, will then refocus at its source at time $t=2T$.

The objective of the present work is to continue the analysis of ITM that we began in \cite{BFP}. In the latter, we analyzed the refocusing wave in the context of wave equations with Dirac-type singularities. We proved refocusing estimates (the notion will be introduced further in the paper) for the wave equation with \textit{spatially constant coefficients}. This allowed us to use the Fourier transform and pursue a fine analysis of the time-singularities in Fourier space. We generalize in this work the refocusing estimates to wave equations with smooth variable coefficients. Naturally, the use of Fourier techniques is not possible and we need to resort to different methods. A feature of ITM is that the refocusing wave is the time derivative of the original wave. This will be reflected in the estimates where there is a loss of a derivative.

The article is organized as follows: in Section \ref{main}, we introduce some background on ITM and state our main results. The latter consists of two theorems: in the first one, we obtain refocusing estimates for the wave equation with varying coefficients when the ITM is modeled by an approximation of a Dirac delta; in the second theorem, we show that the system obtained by removing the approximation is well-posed. The proofs of these theorems are given in Sections \ref{secproof1} and \ref{secproof2}. We provide in the Appendix derivations of wave equations with time-dependent coefficients in the context of electromagnetics, elasticity, and fluids.

\paragraph{Acknowledgment.} This work is supported by NSF CAREER Grant DMS-1452349 and NSF grant DMS-2006416. The author would like to thank Lenya Ryzhik for an interesting discussion.

\section{Main results} \label{main}

We start by introducing some background on ITM.

\subsection{Preliminaries}

\paragraph{Notations.} For $d\geq 1$, $p \in [1,\infty]$ and $s \in \Rm$, we denote by $L^p(\Rm^d)$ and $H^s(\Rm^d)$ the usual Lebesgue and Sobolev spaces. $BV(\Omega)$ is the space of functions with bounded variations in $\Omega$, with $BV_{\rm{loc}}$ and $L^p_{\rm{loc}}$ the local versions of $BV$ and $L^p$. $C^\infty(\Rm^d)$ is the space of infinitely differentiable functions, and $C^\infty_c(\Rm^d)$ the space of $C^\infty(\Rm^d)$ functions with compact support. For two real-valued functions $f$ and $g$, we define $(f,g)=\int_{\Rm^d} f(x) g(x) dx$.

\paragraph{The wave equation.} We consider the following wave equation (we suppose here all variables have been  non-dimensionalized):

\be \label{WE}
\partial_t^2 u_\eps  = a(x) \nabla \cdot \Big(b(x)\big(1+\chi(x)\eta_\eps(t)\big) \nabla u_\eps \Big), \qquad (t,x)\quad \textrm{on} \quad \Rm_+ \times \Rm^d,
\ee
where $a$ and $b$ are two functions modeling a smooth, unperturbed background, and the term $\chi(x)\eta_\eps(t)$ models the action of the ITM. For simplicity of the analysis, we suppose that \fref{WE} is posed over the entire $\Rm^d$, $d \geq 1$. Our results can be  generalized to bounded domains with appropriate boundary conditions without difficulty. For $T>0$ and $0<\eps \leq T$ given, the function $\eta_\eps$ has the form
$$
\eta_\eps(t)=
\left\{
  \begin{array}{ll}
    \eta_0 \eps^{-1} &\textrm{when}\quad |t-T|<\eps/2\\
    0 & \textrm{otherwise}.
\end{array}
 \right.
$$
The ITM acts therefore at time $t=T$ over a window $\eps$ and with amplitude $\eta_0 \eps^{-1}$. The function $\eta_\eps$ is an approximation of a delta function at $t=T$ with weight $\eta_0$, and $\chi$ is a regularized version of the characteristic function of the spatial domain where the ITM acts (we can have $\chi=1$ if the ITM acts on the whole $\Rm^d$). 
Our analysis can be straightforwardly generalized to multiple ITM perturbations of the form
$$
\sum_{i=1}^N \chi_i(x) \eta_\eps^{(i)}(t),
$$
where $\eta_\eps^{(i)}$ has the same form as $\eta_\eps$ and is centered at $T_i$.

We will suppose that $a$ and $b$ are in $C^\infty(\Rm^d)$ with bounded derivatives and positive constants $\underline{a}$, $\bar{a}$, $\underline{b}$, $\bar{b}$ such that
\be \label{ab}
\underline{a} \leq a(x)\leq \bar{a}, \qquad \underline{b}\leq b(x)\leq \bar{b}.
\ee
We assume as well that $\chi \in C^\infty(\Rm^d)$ with bounded derivatives, and that $\chi$ is positive to ensure that the velocity $c^2_\eps(t,x)=a(x)b(x)(1+\chi(x)\eta_\eps(t))$ remains positive. All the $C^\infty$ regularity assumptions are not crucial and can be relaxed to a control of just a few derivatives.

The wave equation equation \fref{WE} is complemented with the initial conditions
\be \label{IC}
u(t=0,x)=u_0(x) \in H^3(\Rm^d), \qquad \partial_t u(t=0,x)=u_1(x) \in H^2(\Rm^d). 
\ee
As will be proved further, such a regularity is needed in order to obtain optimal refocusing estimates. We suppose that $u_0$ and $u_1$ are real-valued, and as a consequence the solution $u$ is real-valued as well since the coefficients in \fref{WE} are real. The case of complex-valued initial conditions is simply obtained by separating real and imaginary parts.

Physical derivations of wave equations with time-dependent coefficients of the form \fref{WE} are given in the Appendix in the context of electromagnetics, elasticity, and surface waves. The solution $u_\eps$ is then the surface height for the latter, or related to the magnetic, electric, displacement field for the former.

\paragraph{Time reversal and refocusing.} We introduce in this paragraph some necessary background on ITM and some results from \cite{BFP}. In order to investigate the refocusing induced by the ITM, it is convenient to recast \fref{WE} as a first-order system: let $\bv_\eps \in \Rm^d$ such that
\be \label{AWE}
  \big[b (1+\chi \eta_\eps)\big]^{-1} \frac{\partial \bv_\eps }{\partial t} + \nabla u_\eps=0,\qquad a^{-1} \pdr {u_\eps} t+\nabla\cdot \bv_\eps=0.
  \ee
The system \fref{AWE} can itself be stated as
  \be \label{HS}
A(x) \frac{\partial \bu_\eps }{\partial t} + D^j \frac{\partial \bu_\eps}{\partial x_j}=V_\eps(t,x) \bS[\bu_\eps],
\ee
where $V_\eps(t,x)=b(x) (1+\chi(x) \eta_\eps(t))$, and $\bu_\eps=(\bv_\eps,u_\eps)$, $\bS[\bu_\eps]=-(\nabla u_\eps,0) $ (which are both considered as column vectors), $A=\textrm{Diag}(b^{-1},\cdots,b^{-1},a^{-1})$ ($b^{-1}$ repeated $d$ times), and $(D^j)_{mn}= \delta_{m, (d+1)} \delta_{n,j}+\delta_{n, (d+1)} \delta_{m,j}$, with $j=1,\cdots,d$, and $m,n=1,\cdots,d+1$. Here and below, we use the summation convention over repeated indices. Equation \fref{HS} is equipped with the initial condition $\bu_0=(\bv_0,u_0)$, where $\bv_0$ is such that $a^{-1} u_1+ \nabla \cdot \bv_0=0$. We will need the Green's function of the unperturbed equation \fref{HS},  i.e. with $V=0$, defined by

\be \label{GF}
A(x) \frac{\partial \bG(t,x,y) }{\partial t} + D^j \frac{\partial \bG(t,x,y)}{\partial x_j}=0, \qquad \bG(0,x,y)= I \delta(x-y) ,
\ee
where $I$ is the $(d+1)\times (d+1)$ identity matrix and $\delta$ the Dirac delta. We extend the Green's function $\bG_t$ to negative values of $t$ by solving \fref{GF} for $t<0$, and find $\bG_{-t} \Gamma=\Gamma \bG_{t}$, for $\Gamma=\textrm{Diag}(-1,\cdots,-1,1)$ the time reversal matrix.
We will use the notation
$$
\bG_t(\bu)(x)=\int_{\Rm^d} \bG(t,x,y) \bu(y) dy.
$$

The unperturbed solution to \fref{HS}, i.e. with $V=0$, is denoted by $\bU=(\bV,U)$. The following decomposition of $\bu_\eps$, obtained in \cite{BFP}, is key to understanding the refocusing effects of an ITM: the wavefield $\bu_\eps$ can be written as
\be \label{declem}
\bu_\eps(t)=\bU(t)+\bu_R(t)+\bu_F(t)+\bR_\eps(t),
\ee
where $\bu_R$ is a backward (time-reversed) propagating wave, $\bu_F$ a forward propagating wave, and $\bR_\eps$ a correction term. The precise expression of these terms can be found in \cite{BFP}, and  $\bR_\eps$ will be our main focus. The decomposition \fref{declem} holds for any solution to the wave equation, and in general $\bR_\eps$ has no reason to be small at all and can actually dominate. The latter reads

$$
\bR_\eps(t)=\int_0^t \bG_{t-s}\Big(V_\eps(s) A^{-1}(\bS[\bu_\eps](s)-\bS[\bU](s))\Big) ds.
$$

In the ITM context, the term $\bR_\eps$ becomes negligible for small $\eps$, and this is the mathematical explanation for the observation of a time-reversed focusing wave. The time-reversed field $\bu_R$ at the refocusing time $t=2T$ is then the dominating term and reads
\bee
\bu_R(2T,x)&=&\int_{\Rm^d} \bK_\eps(x,y) \partial_t \bU(y)dy
\eee
with
\bee
\bK_\eps(x,y)&=&-\frac{\eta_0}{2\eps}\int_{-\frac{\eps}{2}}^{\frac{\eps}{2}}  \Gamma \bG(2s,x,y) ds.
\eee
The kernel $\bK_\eps$ is an approximation of $-\delta(x-y) \Gamma /2 $ when $\eps$ is sufficiently small since $\bG(0,x,y)=\delta(x-y) I$. Some blurring in the refocusing is introduced when $\eps$ is not zero, and refocusing is perfect is the limit $\eps \to 0$. An important observation is that one reconstructs the time derivative of the initial condition $\partial_t \bU(t=0)$ and not the initial condition $\bU(t=0)$. The ITM hence acts as a time differentiator, and this will be seen in the estimates.

The fact that $\bR_\eps$ is negligible when $\eps$ is small is proved in \cite{BFP} when the coefficients $a$, $b$ and $\chi$ are all constant. This allows for the use of the Fourier transform and \fref{WE} is reduced in Fourier space to

$$
\partial_t^2 \hat u_\eps(t,\xi)+\eta_\eps(t) |\xi|^2 \hat u_\eps(t,\xi).
$$
This is a one-dimensional Schr\"odinger equation in the variable $t$ with the singular potential $-\eta_\eps(t) |\xi|^2$. One can then precisely estimate $\hat u_\eps$ in terms of $\eps$ and $\xi$ and obtain sufficient regularity to treat $\bR_\eps$.

Our goal in this work is to generalize this result to the case of variable $a$, $b$ and $\chi$. The fact that $\bR_\eps(t)$ is negligible when $t \geq T+\eps/2$ (the time right after the pertubation) follows from the following heuristical arguments: the non-zero component of $\bS[\bU]-\bS[\bu_\eps]$ in the definition of $\bR_\eps$ reads, for any $s \in [T-\frac{\eps}{2},T+\frac{\eps}{2} ]$,
$$
\nabla (u_\eps(s)-U(s))=\nabla (u_\eps(s)-u_\eps(s-\eps))+\nabla (u_\eps(s-\eps)-U(s)).
$$
As $s-\eps < T-\frac{\eps}{2}$, we are therefore before the perturbation, and the second term in the r.h.s. is actually equal to $\nabla U(s-\eps)-\nabla U$. Since $\nabla U$ is smooth (i.e. it has at least one time derivative), this second term is negligible provided $\eps$ is small compared to a parameter estimating $\nabla U$ in some norm. The same applies to the first term provided $\nabla u_\eps$ has some regularity in time. The essential ingredient in estimating $\bR_\eps$ is then a uniform bound for $\partial_t \nabla u_\eps$ in $L^\infty_{\rm{loc}}(\Rm_+,L^2(\Rm^d))$, which will eventually provide us with an optimal control in terms of $\eps$, that is $\bR_\eps=O(\eps)$ in some appropriate sense. This bound is of course more difficult to obtain than in the constant coefficients case since the Fourier transform is not available. We refer to the relation $\bR_\eps=O(\eps)$ as an optimal refocusing estimate. It is possible to obtain non optimal estimates of the form $\bR_\eps=O(\eps^\gamma)$ for $\gamma<1$ assuming less regularity on the data.\\

We state now our main results.

\subsection{Results}

Using standard methods and assumptions \fref{ab}-\fref{IC}, see e.g. \cite{evans}, Chapter 7, it is not difficult to establish that \fref{WE} admits a unique solution $u_\eps$ such that $u_\eps \in L^\infty_{\rm{loc}}(\Rm_+,H^3(\Rm^d))$ and $\partial_t^2 u_\eps \in L^\infty_{\rm{loc}}(\Rm_+,H^1(\Rm^d))$. Our first result is the theorem below, that provides us with a uniform estimate on $u_\eps$. This estimate is used in the second statement of the theorem in order to obtain the optimal refocusing estimate $\bR^\eps=O(\eps)$.

\begin{theorem} \label{mainth}Let $u_\eps $ be the solution to \fref{WE} with the initial conditions given in \fref{IC}. Then, we have the estimate, for all $\tau>0$,
\be \label{unif}
\| u_\eps\|_{L^\infty(0,\tau,H^2)}+\| \partial_t u_\eps\|_{L^\infty(0,\tau,H^1)} \leq C \| u_0\|_{H^3}+ C \| u_1\|_{H^2},
\ee
where $C\equiv C(a,b,\chi,\eta_0,\tau)$ is independent of $\eps$. Moreover, write $\bR_\eps=(\bW_\eps,w_\eps)$, where $\bW_\eps$ is a vector with $d$ components. We have then the estimate, for all $\tau>0$,
\be \label{unif2}
\|\bW_\eps\|_{L^\infty(0,\tau,H^{-1})}+\|w_\eps\|_{L^\infty(0,\tau,L^2)} \leq C \eps,
\ee
where $C\equiv C(a,b,\chi,\eta_0,\tau,u_0,u_1)$ is independent of $\eps$.
\end{theorem}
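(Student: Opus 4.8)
\textbf{Overall strategy and the engine \fref{unif}.} The plan is to localize everything to the action window $I_\eps=[T-\tfrac{\eps}{2},T+\tfrac{\eps}{2}]$ and to trade its short length $\eps$ against the $O(\eps^{-1})$ amplitude of $\eta_\eps$. Since \fref{unif} is the real engine, I treat it first. I would work with the shifted unknown $z=u_\eps-U$, where $U$ solves the unperturbed problem $\partial_t^2U=a\nabla\cdot(b\nabla U)$ with the same data \fref{IC}. By uniqueness $u_\eps\equiv U$ for $t\le T-\tfrac{\eps}{2}$, so $z$ and $\partial_tz$ vanish at the entrance of $I_\eps$, and inside $I_\eps$ one has $\partial_t^2z=\calA_\eps z+\tfrac{\eta_0}{\eps}\,\mathcal B U$, with $\calA_\eps=a\nabla\cdot\!\big(b(1+\chi\eta_\eps)\nabla\,\cdot\,\big)$ and $\mathcal B=a\nabla\cdot\!\big(b\chi\nabla\,\cdot\,\big)$. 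I would then run the energy identity for $z$ and for each $\partial_{x_k}z$ in the weighted norms naturally attached to $\calA_\eps$, namely $\tfrac12\int a^{-1}|\partial_t\partial^\alpha z|^2+\tfrac12\int b(1+\chi\eta_\eps)|\nabla\partial^\alpha z|^2$. Because the coefficients are $t$-independent on $I_\eps$, the only forcing comes from the source $\tfrac{\eta_0}{\eps}\mathcal BU$ and, at the $H^1$ level, from the commutator $[\partial_{x_k},\calA_\eps]$; both are $O(\eps^{-1})$ but are controlled by the energy itself and act only over a time interval of length $\eps$. Starting from zero energy at $T-\tfrac{\eps}{2}$, a Gronwall estimate over $I_\eps$ closes with an $\eps$-independent constant (Gronwall exponent of order $\eta_0$), yielding $z\in H^2$, $\partial_tz\in H^1$ uniformly, hence $\partial_t\nabla u_\eps\in L^\infty_{\rm loc}L^2$ uniformly. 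The forcing at the $H^1$ level requires $\mathcal BU\in H^1$, i.e. $u_0\in H^3$; reaching an $H^2$ velocity would need $u_0\in H^4$, which is precisely the loss of one derivative recorded in the statement. Outside $I_\eps$ the coefficients are smooth and the standard estimates (\cite{evans}, Ch.~7) propagate $(u_\eps,\partial_tu_\eps)\in H^2\times H^1$.

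\textbf{Reduction of \fref{unif2} to the window.} Granting \fref{unif}, the nonzero part of the forcing in $\bR_\eps$ is $\bS[\bu_\eps]-\bS[\bU]=-(\nabla(u_\eps-U),0)$, and the decomposition of \cite{BFP} is arranged so that the genuinely dangerous contribution comes from where the ITM is active, i.e. from $s\in I_\eps$. There I would estimate $\nabla(u_\eps-U)$ through the increment splitting
\[
\nabla\big(u_\eps(s)-U(s)\big)=\int_{s-\eps}^{s}\partial_r\nabla u_\eps\,dr-\int_{s-\eps}^{s}\partial_r\nabla U\,dr ,
\]
which is valid because $u_\eps(s-\eps)=U(s-\eps)$ whenever $s-\eps\le T-\tfrac{\eps}{2}$. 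Each integral is bounded in $L^2$ by $\eps$ times $\sup\|\partial_r\nabla u_\eps\|_{L^2}$, respectively $\sup\|\partial_r\nabla U\|_{L^2}$, both finite and $\eps$-uniform by \fref{unif} and the smoothness of $U$; hence $\|\nabla(u_\eps-U)(s)\|_{L^2}\le C\eps$ on $I_\eps$. In the integrand $V_\eps A^{-1}(\bS[\bu_\eps]-\bS[\bU])=-V_\eps(b\nabla(u_\eps-U),0)$ the perturbation contributes the factor $b\chi\eta_\eps$, supported on $I_\eps$ with size $\eta_0/\eps$; paired with the $O(\eps)$ bound and integrated over the window of length $\eps$, it produces an $O(\eps)$ forcing.

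\textbf{Propagation and the functional setting.} It remains to propagate this forcing by $\bG$ and to account for $s\notin I_\eps$. Since $\bG$ is the propagator of the energy-conserving hyperbolic system \fref{GF}, it is bounded on the natural energy space, so the window forcing yields $\bR_\eps=O(\eps)$. The gradient structure $\bS=-(\nabla\,\cdot\,,0)$ places the forcing in the $\bv$-slot, which is naturally measured one order down — consistent with the ITM reconstructing the time derivative $\partial_t\bU$ — and this is why the sharp statement is $\bW_\eps=O(\eps)$ in $H^{-1}$ together with $w_\eps=O(\eps)$ in $L^2$, rather than a symmetric bound. The contributions of $s\notin I_\eps$, where $V_\eps=b$ and $u_\eps-U$ is no longer small, are not small pointwise; I would absorb them either through the \cite{BFP} decomposition, which peels off the forward and backward waves $\bu_F,\bu_R$, or by closing a Volterra–Gronwall inequality for $\bR_\eps$ itself in $H^{-1}\times L^2$, where the operator $\bu\mapsto\int_0^t\bG_{t-s}(bA^{-1}\bS[\bu])\,ds$ has $\eps$-uniform norm.

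\textbf{Main obstacle.} The hard part is \fref{unif} inside the window: obtaining an $\eps$-uniform $L^2$ bound on $\partial_t\nabla u_\eps$ in the presence of the $O(\eps^{-1})$ coefficient, and without the Fourier tools available in \cite{BFP}. The combination of the shifted unknown (so that the energy starts from zero) with the short-window Gronwall estimate is what converts the singular $O(\eps^{-1})$ forcing into an $O(1)$ effect, and it simultaneously fixes the derivative budget, $H^3$ data in and $H^1$ velocity out. For \fref{unif2} itself the delicate point is sharpness: a crude energy bound across the window would only give $O(\sqrt\eps)$, and it is exactly the uniform control of $\partial_t\nabla u_\eps$ from \fref{unif}, fed through the increment splitting, that upgrades this to the optimal $O(\eps)$.
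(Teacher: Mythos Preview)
Your argument for \fref{unif} is correct but takes a genuinely different route from the paper. The paper works directly with $u_\eps$ and cannot bound the full energy $E_\eps+\eta_{\eps,0}F_\eps$ uniformly, because at $t_\eps^-$ the term $\eta_{\eps,0}F_\eps(t_\eps^-)$ is $O(\eps^{-1})$. Its remedy is a two-step procedure: first extract only the $\chi$-weighted piece $F_\eps$ (uniform), then use an auxiliary elliptic operator $L_0=-\nabla\cdot(b\nabla\,\cdot\,)+1$ and its inverse square root (Lemma~\ref{extend}) to extend the control to all of $\Rm^d$, at the cost of one spatial derivative; this forces three iterations (levels $0,1,2$) to reach $(H^2,H^1)$. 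Your shift to $z=u_\eps-U$ kills the initial energy at $t_\eps^-$, so the full weighted energy $\tfrac12\int a^{-1}|\partial_t\partial^\alpha z|^2+\tfrac12\int b(1+\chi\eta_\eps)|\nabla\partial^\alpha z|^2$ is bounded directly by Gronwall over $I_\eps$, and since $1+\chi\eta_\eps\ge 1$ this already controls $\nabla\partial^\alpha z$ on all of $\Rm^d$. You therefore close in two levels ($|\alpha|=0,1$) without any $L_0^{-1/2}$ machinery, which is cleaner; the paper's approach, on the other hand, isolates explicitly where the loss of a derivative occurs and would adapt more transparently to perturbations not starting from the unperturbed flow.

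For \fref{unif2} your core mechanism (the increment splitting fed by the uniform bound on $\partial_t\nabla u_\eps$) is exactly the paper's. The paper does not argue through the Duhamel formula and the propagator $\bG$, though: it writes the second-order wave equation satisfied by $w_\eps$ (unperturbed operator, forcing $a\nabla\cdot(b\chi\eta_\eps\nabla(u_\eps-U))$) and applies Lemma~\ref{extend} once, then integrates the first-order equation for $\bW_\eps$. Your worry about contributions from $s\notin I_\eps$ is a red herring stemming from the paper's stated formula $V_\eps=b(1+\chi\eta_\eps)$; as one checks from \fref{AWE}, the source in \fref{HS} is actually proportional to $\chi\eta_\eps$, so the forcing in $\bR_\eps$ is supported on $I_\eps$ and no Volterra--Gronwall closure is needed.
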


Note the loss of a spatial derivative in the uniform estimates \fref{unif}: one needs $u_\eps(t=0) \in H^3$ and $\partial_t u_\eps(t=0) \in H^2$ to obtain a uniform control of $u_\eps(t)$ and $\partial_t u_\eps(t)$ in $H^2$ and $H^1$, respectively. This is induced by the time singularity of the coefficients created by the ITM. This loss is optimal in the sense it is also there in the case of constant coefficients addressed in \cite{BFP} where the Fourier transform allows for exact calculations. Owing to the heuristics that a time derivative is equivalent to a spatial derivative for the free the wave equation, this spatial loss can be related to the fact that the ITM acts as a time differentiator, as mentioned earlier.

The proof of \fref{unif} is based on three estimates obtained in six steps, and our main goal is to get the bound on $\partial_t \nabla u_\eps$ in $L^\infty_{\rm{loc}}(\Rm_+,L^2(\Rm^d))$. In the first step, we derive a classical energy estimates for $\nabla u_\eps$ in $L^2$; now, this estimate is only uniform in $\eps$ at the location of the perturbation, modeled by $\chi$. In the second step, we extend the estimate to $\Rm^d$ at the price of losing one derivative. At the end of these first two steps, we have obtained uniform estimates for $u_\eps$ in $L^\infty_{\rm{loc}}(\Rm_+,L^2(\Rm^d))$ and for $\partial_t u_\eps$ in $L^\infty_{\rm{loc}}(\Rm_+,H^{-1}(\Rm^d))$. We then differentiate the equation twice, and use the same procedure as in steps 1 and 2 to arrive at the desired result for $\partial_t \nabla u_\eps$.

Estimate \fref{unif2} is a direct consequence of \fref{unif} and the equations satisfied by the different components of $\bR_\eps$.

Our second result concerns the limit $\eps \to 0$, and provides us with an existence and uniqueness theorem for the limiting wave equation with a Dirac delta at time $t=T$. The ITM is seen via a jump condition on the time derivative of the solution at $t=T$.

\begin{theorem} \label{mainth2}
  Consider the wave equation, for $t>0$, and $t \neq T$,
  \be \label{WE2}
\partial_t^2 u  = a(x) \nabla \cdot \big(b(x) \nabla u \big),
\ee
equipped with the initial conditions \fref{IC} and the following jump condition at $t=T$:
\be \label{jump}
\partial_t u(T^+,x)=\partial_t u(T^-,x)+ \eta_0 a(x) \nabla \cdot \Big(b(x) \chi(x) \nabla u(T,x) \Big).
\ee
Then, the above system admits a unique solution $u$ in $C^0(\Rm_+,H^1(\Rm^d)) \cap L_{\rm{loc}}^\infty(\Rm_+,H^{2}(\Rm^d))$ such that $\partial_t u \in L^\infty_{\rm{loc}}(\Rm_+,H^{1}(\Rm^d)) \cap BV_{\rm{loc}}(\Rm_+,L^2(\Rm^d))$. Equation \fref{WE2} is verified almost everywhere in $(0,T) \times \Rm^d$ and in $(T,+\infty) \times \Rm^d$, and the jump condition \fref{jump} is satisfied in $H^{-1}(\Rm^d)$.

\noindent Moreover, $u_\eps$ converges to $u$ as $\eps \to 0$ in $ L_{\rm{loc}}^\infty(\Rm_+,H^{2}(\Rm^d))$ weak-$*$.
  \end{theorem}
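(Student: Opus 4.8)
The plan is to obtain the limiting solution $u$ as a weak-$*$ limit of the family $u_\eps$ and to verify that the limit satisfies both the free wave equation away from $t=T$ and the jump condition \fref{jump} across $t=T$; uniqueness is then handled by an energy argument. Concretely, I would start from the uniform bounds \fref{unif} of Theorem \ref{mainth}, which give a subsequence $u_{\eps_k}$ converging to some $u$ in $L^\infty_{\rm{loc}}(\Rm_+,H^2)$ weak-$*$, with $\partial_t u_{\eps_k} \rightharpoonup \partial_t u$ in $L^\infty_{\rm{loc}}(\Rm_+,H^1)$ weak-$*$. On any time interval $[0,T-\delta]$ or $[T+\delta,\tau]$ the coefficient $\eta_\eps$ vanishes for $\eps$ small, so there $u_\eps$ solves \fref{WE2} exactly; passing to the limit in the weak formulation (the operator $a\nabla\cdot(b\nabla\cdot)$ is linear and continuous from $H^2$ to $L^2$, hence weak-$*$ continuous) shows $u$ solves \fref{WE2} a.e.\ on $(0,T)\times\Rm^d$ and on $(T,+\infty)\times\Rm^d$, and that $u \in C^0(\Rm_+,H^1)$ on each side.

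The heart of the matter is the jump condition. I would integrate \fref{WE} in time across the support of $\eta_\eps$, i.e.\ over $[T-\eps/2,T+\eps/2]$, and write
\be
\partial_t u_\eps\Big(T+\tfrac{\eps}{2}\Big)-\partial_t u_\eps\Big(T-\tfrac{\eps}{2}\Big)
= \int_{T-\eps/2}^{T+\eps/2} a\,\nabla\cdot\big(b\,\nabla u_\eps\big)\,ds
+ \eta_0\,\frac{1}{\eps}\int_{T-\eps/2}^{T+\eps/2} a\,\nabla\cdot\big(b\,\chi\,\nabla u_\eps(s)\big)\,ds.
\ee
The first integral on the right tends to zero as $O(\eps)$ in $H^{-1}$ by the uniform $H^2$ bound on $u_\eps$, so the free part contributes no jump. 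For the second integral, the factor $\eta_0\eps^{-1}$ times the window of width $\eps$ produces an average of $a\nabla\cdot(b\chi\nabla u_\eps(s))$ over $[T-\eps/2,T+\eps/2]$; using the uniform time-regularity of $\nabla u_\eps$ (the bound on $\partial_t\nabla u_\eps$ in $L^\infty_{\rm{loc}}(\Rm_+,L^2)$ established in \fref{unif}) this average converges to $\eta_0\,a\nabla\cdot(b\chi\nabla u(T))$ in $H^{-1}$, giving precisely \fref{jump}. I expect this time-continuity argument to be the main obstacle: one must argue that $u_\eps(s)$ for $s$ in the shrinking window stays close to the common limit value $u(T)$ uniformly, which is exactly where the $\partial_t\nabla u_\eps$ bound is indispensable, and one must also control the weak convergence of the second derivative so that the left-hand difference converges to the genuine one-sided traces $\partial_t u(T^\pm)$. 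The $BV_{\rm{loc}}(\Rm_+,L^2)$ regularity of $\partial_t u$ follows because $\partial_t u$ is continuous on each side of $T$ with a single jump of finite $L^2$ size, so its total variation is finite on compact intervals.

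Finally, for uniqueness I would take two solutions $u^{(1)},u^{(2)}$ with the stated regularity, set $w=u^{(1)}-u^{(2)}$, and note that $w$ satisfies \fref{WE2} on $(0,T)$ with zero initial data, so the standard energy identity for the free wave equation forces $w\equiv 0$ on $[0,T]$, hence $w(T)=0$ and $\partial_t w(T^-)=0$. Since both solutions satisfy the \emph{same} jump condition \fref{jump} with the \emph{same} value $w(T)=0$ on the right-hand side, we get $\partial_t w(T^+)=0$ as well, so $w$ restarts at $T$ with zero Cauchy data and the energy estimate again gives $w\equiv 0$ on $[T,\infty)$. The weak-$*$ convergence of the full family $u_\eps$ (not just a subsequence) then follows from this uniqueness, since every subsequential limit must coincide with the unique $u$.
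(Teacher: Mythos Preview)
Your overall strategy is correct and matches the paper's proof in its skeleton: compactness from \fref{unif}, identification of the limit equation away from $t=T$, derivation of the jump, uniqueness by an energy argument on each side, and full-sequence convergence from uniqueness. The genuine difference lies in how the jump condition is extracted.

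You integrate \fref{WE} in time over $[T-\eps/2,T+\eps/2]$ and attempt to pass to the limit in the resulting pointwise-in-time identity. The paper instead works entirely in the distributional formulation: it multiplies \fref{WE} by a test function $\varphi\in C_c^\infty(\Rm_+\times\Rm^d)$, integrates by parts in time, and passes to the limit using that $\eta_\eps\to\eta_0\,\delta(t-T)$ in the sense of measures together with the strong $C^0([0,\tau],L^2_{\rm loc})$ compactness of $u_\eps$ coming from \fref{unif}. This produces a single distributional identity on all of $\Rm_+\times\Rm^d$, from which one first reads off $\partial_t u\in BV_{\rm loc}(\Rm_+,H^{-1})$, and then recovers the jump by choosing $\varphi=\chi_k(t)\psi(x)$ with $\chi_k$ a tent function of width $O(1/k)$ centered at $T$ and letting $k\to\infty$. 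The point is that once the $BV$ regularity is established, the one-sided limits $\partial_t u(T^\pm)$ exist a priori, and the tent test merely identifies their difference.

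Your route, by contrast, requires you to show directly that $\partial_t u_\eps(T+\eps/2)\to\partial_t u(T^+)$, which you correctly flag as the main obstacle. This is not immediate from weak-$*$ convergence in $L^\infty_{\rm loc}(\Rm_+,H^1)$ alone. One way to close it: observe that on $[T+\eps/2,\infty)$ the function $u_\eps$ solves the free equation with Cauchy data $(u_\eps(T+\eps/2),\partial_t u_\eps(T+\eps/2))$; these data converge (the first in $H^1$ via the uniform bound on $\partial_t u_\eps$, the second in $H^{-1}$ by your own integral identity, since the left trace $\partial_t u_\eps(T-\eps/2)=\partial_t U(T-\eps/2)$ converges trivially); continuous dependence for the free wave equation then forces the limit solution on $(T,\infty)$ to be launched from exactly these limit data, which identifies them with $(u(T),\partial_t u(T^+))$. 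The paper's test-function device sidesteps this bootstrap entirely. Either argument closes; the paper's is somewhat cleaner because it never appeals to pointwise-in-time convergence at the moving endpoints $T\pm\eps/2$.
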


The rest of the paper is dedicated to the proofs of Theorems \ref{mainth} and \ref{mainth2}.

\section{Proof of Theorem \ref{mainth}} \label{secproof1}
We will consistenly use that $a,b$ and $\chi$ are smooth functions, and that $a,b$ are positive and bounded below. We will not recall these facts for each estimate, and for simplicity will not make explicit the dependency  of the various constants on $a,b,\chi$. The estimates will be derived for regular initial conditions $u_0$ and $u_1$ in $C_c^\infty(\Rm^d)$ to justify the calculations, in particular the integration by parts over $\Rm^d$ using finite speed of propagation of the support. Hence,  we will work with a solution $u_\eps$ that is infinitely differentiable with respect to the spatial variables and with bounded support for finite times, and that has two bounded derivatives with respect to $t$. The case $u_0 \in H^3(\Rm^d)$ and $u_1 \in H^2(\Rm^d)$ follows by a simple limiting argument.

\subsection{First estimate}

We derive in this section uniform bounds for $u_\eps$ and $\partial_t u_\eps$ in $L^2(\Rm^d)$ and $H^{-1}(\Rm^d)$, respectively. The first step is to obtain a uniform control at the location of the perturbation. 

\paragraph{Step 1:} We begin with a classical energy estimate. Let $$
E_\eps(t)=\frac{1}{2}\int_{\Rm^d} \big( |\partial_t u_\eps(t,x)|^2+ c^2(x) |\nabla u_\eps(t,x)|^2\big) dx,
$$
and 
$$
F_\eps(t)=\frac{1}{2}\int_{\Rm^d} c^2(x) \chi(x) |\nabla u_\eps(t,x)|^2 dx,
$$
where we have set $c^2(x)=a(x)b(x)$. All calculations in the proof are justified since $u_\eps$ has sufficient regularity. In particular, the wave equation \fref{WE} is satisfied everywhere on $\Rm_+ \times \Rm^d$, and we find, by multiplying \fref{WE} by $\partial_t u_\eps$ and integrating in $x$, for all $t>0$,
\bee
\frac{d E_\eps(t)}{ dt}+\eta_\eps(t)\frac{d F_\eps(t)}{ dt}&=&-\int_{\Rm^d} b(x)(1+\chi(x) \eta_\eps(t))\nabla a(x) \cdot \nabla u_\eps(t,x) \, \partial_t u_\eps(t,x) dx\\
&=:&A_\eps(t).
\eee
We have, using the Cauchy-Schwarz inequality, for all $t>0$,
$$
|A_\eps(t)| \leq C(1+\eta_\eps(t)) E_\eps(t).
$$
Let now $t_\eps^\pm= T \pm \frac{1}{2}\eps $, and $\eta_{\eps,0}=\eta_0 /\eps$.
Then, since $\eta_\eps(t)= \eta_{\eps,0}$ for $t\in (t_\eps^-,t_\eps^+)$,
\bee
\frac{d E_\eps(t)}{ dt}+\eta_{\eps,0}\frac{d F_\eps(t)}{ dt}\leq C (1+\eta_{\eps,0}) E_\eps(t), \qquad t \in (t_\eps^-,t_\eps^+),
\eee
which yields, for all $t\in (t_\eps^-,t_\eps^+)$,
\be \label{E1}
E_\eps(t)+\eta_{\eps,0}F_\eps(t) \leq \big(E_\eps(t_\eps^-)+\eta_{\eps,0}F_\eps(t_\eps^-)\big)e^{C (\eps+\eta_0)}.
\ee
As a consequence, for all $t\in (t_\eps^-,t_\eps^+)$,
\be \label{estF}
F_\eps(t) \leq \big( \eps \eta_0^{-1} E_\eps(t_\eps^-)+F_\eps(t_\eps^-)\big)e^{C (\eps+\eta_0)}=:(M_{0,\eps})^2.
\ee
This last estimate is uniform in $\eps$ since $E_\eps(t)$ and $F_\eps(t)$ are continuous in time and independent  of  $\eps$ before the perturbation. Note that \fref{estF} only provides us with a uniform control at the location of the perturbation, and that estimate  \fref{E1} does  not yield a uniform bound over the entire $\Rm^d$. For this, we need to exploit the latter estimate on $F_\eps$ and go back  to the wave equation, at the price  of losing one derivative.
\paragraph{Step  2.} We exploit here estimate \fref{estF} on $F_\eps$ to control $u_\eps$ and $\partial_t u_\eps$ over $\Rm^d$. We rewrite \fref{WE} as
$$
\partial_t^2 u_\eps = -a  L_0 u_\eps+ a  u_\eps +a \eta_\eps L_1  u_\eps  \qquad (t,x)\in \Rm_+ \times \Rm^d,
$$
where $$
L_0 u=-\nabla \cdot \big(b  \nabla u \big)+u, \qquad
L_1 u= \nabla \cdot \big( b \chi \nabla u \big).
$$
Recalling that $b \in C^\infty(\Rm^d)$ with $0<\underline{b}\leq b(x) \leq \bar{b}$ and bounded derivatives, the operator $L_0$ is self-adjoint when equipped with the domain $H^2(\Rm^d)$. For $s \in \Rm$,  the inverse operator $L_0^{-1}$ is an isomorphism from $H^{s-2}(\Rm^d)$ to $H^{s}(\Rm^d)$, and its square root an isomorphism from $H^{s-1}(\Rm^d)$ to $H^{s}(\Rm^d)$, We will use several times the following lemma.
\begin{lemma} \label{extend}
  Let $\tau>0$, $f \in L^\infty(0,\tau,L^2(\Rm^d))$, and $u \in L^{\infty}(0,\tau,H^2(\Rm^d))$ with $\partial^2_{t} u \in L^\infty(0,\tau,L^2(\Rm^d))$ such that
  \be \label{eqL0}
\partial_t^2 u = -a L_0  u + a u+f, \qquad (t,x)\; a.e.
\ee
For all $t \in (0,\tau)$, we have then the estimate, for some $C=C(a,b,\tau)>0$ independent of $u$ and $f$:
\begin{align*}
  \|u(t)\|_{L^2}+\|&L_0^{-1/2} (a^{-1} \partial_t u(t)) \|_{L^2} \\
  &\leq C \|u(0)\|_{L^2}+C\|L_0^{-1/2} (a^{-1} \partial_t u(0)) \|_{L^2}
+C \int_0^t \|a^{-1} f(s)\|_{H^{-1}} ds.
\end{align*}
\end{lemma}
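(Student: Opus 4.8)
The plan is to run an energy estimate adapted to the structure $-a L_0$, but at one derivative lower than the classical energy, so that only $\|a^{-1}f\|_{H^{-1}}$ (and not $\|f\|_{L^2}$) enters on the right-hand side. The naive attempt -- pairing \fref{eqL0} with $L_0^{-1}(a^{-1}\partial_t u)$ in the standard inner product $(\cdot,\cdot)$ -- fails: the variable coefficient $a$ obstructs the cancellation of the cross terms, leaving a contribution that involves $\|u\|_{H^1}$, which cannot be absorbed by $\|u\|_{L^2}$ alone. The remedy is to work in the weighted inner product $\langle f,g\rangle := (a^{-1}f,g)$, which by \fref{ab} is equivalent to the $L^2$ inner product, and with respect to which the operator $\calA := a L_0$ is self-adjoint and positive. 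Indeed $\langle \calA u,v\rangle=(L_0 u,v)=(u,L_0 v)=\langle u,\calA v\rangle$, while $\langle \calA u,u\rangle=(L_0 u,u)=\int b|\nabla u|^2+\|u\|_{L^2}^2\geq \underline a\,\langle u,u\rangle$, so that $\calA\geq \underline a$ and $\calA^{-1/2}$ is bounded on $L^2$.

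Rewriting \fref{eqL0} as $\partial_t^2 u+\calA u=a u+f$ and pairing with $\calA^{-1}\partial_t u$ in $\langle\cdot,\cdot\rangle$, the self-adjointness and time-independence of $\calA^{-1}$ recast the left-hand side as the derivative of the energy
\[
\calE(t):=\tfrac12\Big(\langle \calA^{-1}\partial_t u,\partial_t u\rangle+\langle u,u\rangle\Big),
\]
the cross term $\langle\calA^{-1}\partial_t u,\calA u\rangle=\langle \partial_t u,u\rangle$ cancelling against $\frac{d}{dt}\tfrac12\langle u,u\rangle$. The key algebraic identity is $\calA^{-1}=L_0^{-1}a^{-1}$, from which $\langle \calA^{-1}\partial_t u,\partial_t u\rangle=(L_0^{-1}(a^{-1}\partial_t u),a^{-1}\partial_t u)=\|L_0^{-1/2}(a^{-1}\partial_t u)\|_{L^2}^2$; combined with $\langle u,u\rangle\simeq\|u\|_{L^2}^2$, this shows $\calE(t)$ is comparable to the square of the quantity the lemma controls. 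One is left with
\[
\frac{d\calE}{dt}=\langle a u+f,\calA^{-1}\partial_t u\rangle.
\]

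To close, I would split $\calA^{-1}=\calA^{-1/2}\calA^{-1/2}$ and apply Cauchy--Schwarz in $\langle\cdot,\cdot\rangle$. The common factor $\|\calA^{-1/2}\partial_t u\|_{\langle\cdot\rangle}=\|L_0^{-1/2}(a^{-1}\partial_t u)\|_{L^2}\leq\sqrt{2\calE}$ is controlled by the energy. For the source, boundedness of $\calA^{-1/2}$ gives $\|\calA^{-1/2}(a u)\|_{\langle\cdot\rangle}\lesssim\|u\|_{L^2}\lesssim\sqrt{\calE}$, while $\|\calA^{-1/2}f\|_{\langle\cdot\rangle}^2=\|L_0^{-1/2}(a^{-1}f)\|_{L^2}^2\simeq\|a^{-1}f\|_{H^{-1}}^2$ by the stated isomorphism $L_0^{-1/2}\colon H^{-1}\to L^2$. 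Hence $\frac{d\calE}{dt}\lesssim \calE+\|a^{-1}f\|_{H^{-1}}\sqrt{\calE}$, so after a standard regularization $\frac{d}{dt}\sqrt{\calE}\lesssim\sqrt{\calE}+\|a^{-1}f\|_{H^{-1}}$, and Gr\"onwall's inequality yields the claim once $e^{C\tau}$ is absorbed into $C$.

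The main obstacle is conceptual rather than computational: recognizing that the correct framework is the $a^{-1}$-weighted inner product, which is precisely what restores the self-adjointness destroyed by the variable coefficient, together with the identity $\calA^{-1}=L_0^{-1}a^{-1}$ that makes the natural energy coincide with the target norms $\|u\|_{L^2}$ and $\|L_0^{-1/2}(a^{-1}\partial_t u)\|_{L^2}$. The remaining points -- justifying the differentiation of $\calE$ (legitimate since $u\in L^\infty(0,\tau,H^2)$ and $\partial_t^2 u\in L^\infty(0,\tau,L^2)$ render $\calE$ absolutely continuous) and the norm equivalences -- are routine.
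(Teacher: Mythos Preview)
Your proof is correct and is essentially the same as the paper's, only packaged more conceptually: the paper works directly with $G(t)=\tfrac12(\|u\|_a^2+\|L_0^{-1/2}(a^{-1}\partial_t u)\|_{L^2}^2)$ and computes $\frac{d}{dt}\|u\|_a^2$ by substituting $u=-L_0^{-1}a^{-1}\partial_t^2 u+L_0^{-1}u+L_0^{-1}a^{-1}f$, arriving at the identical identity $\frac{dG}{dt}=(L_0^{-1/2}(a^{-1}\partial_t u),L_0^{-1/2}(a^{-1}f))+(L_0^{-1/2}(a^{-1}\partial_t u),L_0^{-1/2}u)$ and the same Gr\"onwall closure. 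Your framing via the self-adjointness of $\calA=aL_0$ in the $a^{-1}$-weighted inner product and the relation $\calA^{-1}=L_0^{-1}a^{-1}$ makes the cancellation transparent, but unwinding the definitions shows $\calE=G$ and $\frac{d\calE}{dt}=\langle au+f,\calA^{-1}\partial_t u\rangle$ coincides line by line with the paper's $B(t)$.
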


  \begin{proof}
    Consider first the weighted $L^2$ norm,
    $$
    \|u\|^2_a=\int_{\Rm^d} |u(x)|^2 a^{-1}(x) dx,
    $$
    which is equivalent to the usual $L^2$ norm since $a>0$ is bounded below and above. The calculations below are justified since $u$ has the required regularity. We have
    \bee
    \frac{1}{2} \frac{d  }{ dt} \|u(t)\|^2_a &=& (a^{-1} \partial_t u, u)\\
    &=&- (a^{-1} \partial_t u, L_0^{-1} a^{-1} \partial^2_{t}u)+ (a^{-1} \partial_t u, L_0^{-1} u)+ (a^{-1} \partial_t u, L_0^{-1} a^{-1} f),
    \eee
    where we used \fref{eqL0} to express $u$. With
    $$
G(t)=\frac{1}{2} \left( \|u(t)\|^2_a+\|L_0^{-1/2} (a^{-1} \partial_t u) \|^2_{L^2}\right),
$$
we find
$$
\frac{d G(t)}{ dt}=B(t):= \left(L_0^{-1/2} (a^{-1} \partial_t u), L_0^{-1/2} (a^{-1} f)\right)+ \left(L_0^{-1/2} a^{-1} \partial_t u, L_0^{-1/2} u\right).
$$
We have, using the Cauchy-Schwarz inequality,
\bee
|B(t)| &\leq& \|L_0^{-1/2} (a^{-1} \partial_t u)\|_{L^2} \big(\|L_0^{-1/2} a^{-1} f\|_{L^2}+C\| u \|_{L^2} \big)\\
&\leq & CG(t)+C  G^{1/2}(t) \|a^{-1} f\|_{H^{-1}},
\eee
where we exploited the fact that $L_0^{-1/2}$ is an isomorphism from $H^{-1}(\Rm^d)$ to $L^2(\Rm^d)$. Then,
$$
\frac{d G(t)}{ dt} \leq CG(t)+C G^{1/2}(t) \|a^{-1} f(t)\|_{H^{-1}}, \qquad t. \; a.e. 
$$
which is equivalent to
$$
\frac{d}{ dt} \left( e^{-Ct} G(t)\right) \leq C e^{-Ct} G^{1/2}(t) \|a^{-1} f(t)\|_{H^{-1}}, \qquad t. \; a.e. 
$$
This yields the estimate, for $t \in (0,\tau)$,
$$
G^{1/2}(t)\leq C G^{1/2}(0)+C \int_{0}^t \|a^{-1} f(s)\|_{H^{-1}} ds,
$$
which concludes the proof after direct algebra.
\end{proof}

\medskip

Using the previous lemma with $u(t)=u_\eps(t_\eps^-+t)$, $f(t)=a \eta_\eps(t_\eps^-+t) L_1  u_\eps(t_\eps^-+t)$, we find the estimate, for $t \in (t_\eps^-, t_\eps^+)$,
\begin{align*}
  \|u_\eps(t)\|_{L^2}+\|&\partial_t u_\eps(t)\|_{H^{-1}} \\
  &\leq C \|u_\eps(t_\eps^-)\|_{L^2}+C\|\partial_t u_\eps(t_\eps^-)\|_{H^{-1}}+ C \int_{t_\eps^-}^t \eta_\eps(s) \| b \chi \nabla u_\eps(s)\|_{L^2} ds.
\end{align*}
Together with  \fref{estF}, it follows, for all $t\in (t_\eps^-,t_\eps^+)$,
\be \label{M1}
 \|u_\eps(t)\|_{L^2}+\|\partial_t u_\eps(t)\|_{H^{-1}} \leq C \|u_\eps(t_\eps^-)\|_{L^2}+C \|\partial_t u_\eps(t_\eps^-)\|_{H^{-1}} + C M_{0,\eps}=:M_{1,\eps}.
\ee

This is our first uniform estimate over $\Rm^d$. Note that in order to control $u_\eps$ and $\partial_t u_\eps$ in $L^2(\Rm^d)$ and $H^1(\Rm^d)$, respectively, we need one more derivative for each via the constant $M_{0,\eps}$. We iterate now twice in order to control higher spatial derivatives of $u_\eps$ and $\partial_t u_\eps$. The method is similar as above.
\subsection{Second estimate}

\paragraph{Step 1.} We differentiate \fref{WE} with respect to $x_j$, $j=1,\cdots,d$, and introduce $v^{(j)}:=\partial_{x_j} u_\eps$.  We use the shorthand $f_j:=\partial_{x_j} f$ to denote the partial derivative of a function $f$ with respect to $x_j$. With the previous definitions of $L_0$ and $L_1$ at hand, we have
\be \label{eqvj}
\partial_t^2 v^{(j)} = -a L_0 v^{(j)}+ a  v^{(j)} +a \eta_\eps  L_1  v^{(j)}+L_2 u_\eps+ L_3 u_\eps, 
\ee
where
\bee
L_2 u_\eps&=&  ( c^2+c^2 \eta_{\eps} \chi )_j \Delta u_\eps\\
L_3 u_\eps&=&a \nabla (b+b \eta_\eps \chi)_j \cdot \nabla u_\eps+a_j \nabla (b+b \eta_\eps \chi) \cdot \nabla u_\eps.
\eee
With
$$
E^{(j)}_\eps(t)=\frac{1}{2}\int_{\Rm^d} \big( |\partial_t v^{(j)}(t,x)|^2+ c^2(x) |\nabla v^{(j)}(t,x)|^2\big) dx,
$$
and 
$$
F^{(j)}_\eps(t)=\frac{1}{2}\int_{\Rm^d} c^2(x) \chi(x) |\nabla v^{(j)}(t,x)|^2 dx,
$$
we obtain from \fref{eqvj} the standard energy estimate, for $t>0$,
\bea \label{stE}
\frac{d E^{(j)}_\eps(t)}{ dt}+\eta_\eps(t)\frac{d F^{(j)}_\eps(t)}{ dt}&=&\big(\partial_t v^{(j)}, L_2 u_\eps+ L_3 u_\eps\big).
\eea
We estimate now the right-hand side. Classical interpolation yields first
$$
 \|L_2 u_\eps+L_3 u_\eps\|_{L^2} \leq C (1+\eta_\eps)(\|u_\eps\|_{L^2}+\|\Delta u_\eps\|_{L^2}).
 $$
 Moreover, there exists a constant $C>0$ independent of $u_\eps$ such that
 $$
 \|\Delta u_\eps(t)\|^2_{L^2} \leq C \sum_{j=1}^d \int_{\Rm^d} c^2(x) |\nabla v^{(j)}(t,x)|^2 dx.
 $$
 Introducing
 $$
 E_{1,\eps}(t):=\sum_{j=1}^d E^{(j)}_\eps(t), \qquad F_{1,\eps}(t):=\sum_{j=1}^d F^{(j)}_\eps,
 $$
 we then find, together with \fref{M1} in order to control the $L^2$ norm of $u_\eps$, for $t\in (t_\eps^-,t_\eps^+)$,
$$
 \|L_2 u_\eps(t)+L_3 u_\eps(t)\|_{L^2} \leq C (1+\eta_\eps)(M_{1,\eps}+(E_{1,\eps}(t))^{1/2}).
 $$
 Going back to \fref{stE}, we find for $t\in (t_\eps^-,t_\eps^+)$, after using the Young inequality,
\bee
\frac{d E_{1,\eps}(t)}{ dt}+\eta_\eps(t)\frac{d F_{1,\eps}(t)}{ dt}&\leq & C (1+\eta_\eps(t)) E_{1,\eps}+C (1+\eta_\eps(t)) M_{1,\eps}^2.
\eee
This provides us with the estimate, for $t\in (t_\eps^-,t_\eps^+)$,
$$
E_{1,\eps}(t)+\eta_{\eps,0}F_{1,\eps}(t) \leq C \big( E_{1,\eps}(t_\eps^-)+\eta_{\eps,0}F_{1,\eps}(t_\eps^-)+M_{1,\eps}^2\big),
$$
leading to
\be \label{estF1}
F_{1,\eps}(t) \leq C \big( \eps \eta_0^{-1}E_{1,\eps}(t_\eps^-)+F_{1,\eps}(t_\eps^-)+ \eps \eta_0^{-1} M_{1,\eps}^2\big)=:(M_{3,\eps})^2.
\ee
Again, the uniform estimate only holds at this point at the location of the perturbation, and it is extended below to $\Rm^d$ by using Lemma \ref{extend}.

\paragraph{Step 2.} We have first the following estimate, that is a consequence of \fref{estF1},
$$
\| L_1 v^{(j)} \|_{H^{-1}} \leq C \| b \chi \nabla v^{(j)}\|_{L^2} \leq C M_{3,\eps}. 
$$
The next two estimates are straightforward: 
\bee
\| L_3 u_\eps \|_{H^{-1}} &\leq& C(1+\eta_\eps) \| u_\eps \|_{L^2} \leq C(1+\eta_\eps) M_{1,\eps}\\
\| L_2 u_\eps \|_{H^{-1}} &\leq& C (1+\eta_\eps)\| \nabla u_\eps\|_{L^2}. 
\eee
Using now Lemma \ref{extend} for \fref{eqvj}, and summing from $j=1$ to  $d$, we find for $t\in (t_\eps^-,t_\eps^+)$:
\begin{align*}
  \|\nabla u_\eps(t)\|_{L^2}+\|\partial_t \nabla u_\eps(t)\|_{H^{-1}} \leq & \quad C \|\nabla u_\eps(t_\eps^-)\|_{L^2}+ C \|\partial_t \nabla u_\eps(t_\eps^-)\|_{H^{-1}}\\
  &+ C \int_{t_\eps^-}^t (1+\eta_\eps(s)) (M_{1,\eps}+M_{3,\eps}+\| \nabla u_\eps(s) \|_{L^2} )ds.
\end{align*}
Gronwall's  Lemma then yields the estimate, for $t\in (t_\eps^-,t_\eps^+)$:
\bea \nonumber
\|\nabla u_\eps(t)\|_{L^2}+\|\partial_t \nabla u_\eps(t)\|_{H^{-1}} &\leq& C \big(\|\nabla u_\eps(t_\eps^-)\|_{L^2}+\|\partial_t \nabla u_\eps(t_\eps^-)\|_{H^{-1}}+ M_{1,\eps}+M_{3,\eps} \big)\\
&=:&M_{4,\eps}. \label{Em4}
\eea

We have therefore just obtained a uniform  bound in $H^1(\Rm^d)$ and $L^2(\Rm^d)$ for $u_\eps$ and $\partial_t u_\eps$, respectively.  We iterate one last time to control  $\partial_t u_\eps$ in $H^1(\Rm^d)$, which is what is needed to prove refocusing. We do not detail some of the calculations since the method is similar to what is done is the previous steps.

\subsection{Third estimate}
\paragraph{Step 1.}
We differentiate \fref{eqvj} with respect to $x_i$, $i=1,\cdots,d$, and denote $v^{(ij)}:=\partial^2_{x_i x_j} u_\eps$. We have
\be \label{eqvij}
\partial_t^2 v^{(ij)} = -a L_0 v^{(ij)}+ a v^{(ij)} +a\eta_\eps L_1  v^{(ij)}+L_2 v^{(i)}+ L_3 v^{(i)}+L_2' v^{(j)}+ L_3' v^{(j)}+L_4 u_\eps+L_5 u_\eps, 
\ee
where
\bee
L_2' u&=&  ( c^2+c^2 \eta_{\eps} \chi )_i \Delta u\\
L_3' u&=&a \nabla (b+b \eta_\eps \chi)_i \cdot \nabla u+a_i \nabla (b+b \eta_\eps \chi) \cdot \nabla u\\
L_4 u&=& ( c^2+c^2 \eta_{\eps} \chi )_{ij} \Delta u\\
L_5 u&=& a_i \nabla (b+b \eta_\eps \chi)_{j} \cdot \nabla u+a \nabla (b+b \eta_\eps \chi)_{ij} \cdot \nabla u+a_{ij} \nabla (b+b \eta_\eps \chi) \cdot \nabla u.
\eee
With
\bee
E^{(ij)}_\eps(t)&=&\frac{1}{2}\int_{\Rm^d} \big( |\partial_t v^{(ij)}(t,x)|^2+ c^2(x) |\nabla v^{(ij)}(t,x)|^2\big) dx\\
F^{(ij)}_\eps(t)&=&\frac{1}{2}\int_{\Rm^d} c^2(x) \chi(x) |\nabla v^{(ij)}(t,x)|^2 dx,
\eee
we obtain from \fref{eqvij} the energy estimate
\be \label{eneg3}
\frac{d E^{(ij)}_\eps(t)}{ dt}+\eta_\eps(t)\frac{d F^{(ij)}_\eps(t)}{ dt}=\left(\partial_t v^{(ij)}, S_\eps\right),
\ee
where
$$
S_\eps=L_2 v^{(i)}+ L_3 v^{(i)}+L_2' v^{(j)}+ L_3' v^{(j)}+L_4 u_\eps+L_5 u_\eps.
$$
We now estimate $R_\eps$. Let
$$
\|D^3 u_\eps\|^2_{L^2}:= \sum_{i,j,k=1}^d \int_{\Rm^d} |\partial^3_{x_i x_j x_k} u_\eps|^2 dx.
$$
Direct calculations yield
\be \label{Reps}
 \|S_\eps\|_{L^2} \leq C (1+\eta_\eps)(\|u_\eps\|_{L^2}+\| D^3 u_\eps\|_{L^2}).
 \ee
Let moreover
 $$
 E_{2,\eps}:=\sum_{i,j=1}^d E^{(ij)}_\eps(t), \qquad F_{2,\eps}:=\sum_{i,j=1}^d F^{(ij)}_\eps.
 $$
For $t\in (t_\eps^-,t_\eps^+)$, we then find, combining \fref{eneg3} and \fref{Reps}, after using the Young inequality,
\bee
\frac{d E_{2,\eps}(t)}{ dt}+\eta_\eps(t)\frac{d F_{2,\eps}(t)}{ dt}&\leq & C (1+\eta_\eps(t)) E_{2,\eps}+C (1+\eta_\eps(t)) M_{1,\eps}^2.
\eee
This provides us with the estimate, for $t\in (t_\eps^-,t_\eps^+)$,
$$
E_{2,\eps}(t)+\eta_{\eps,0}F_{2,\eps}(t) \leq C \big( E_{2,\eps}(t_\eps^-)+\eta_{\eps,0}F_{2,\eps}(t_\eps^-)+M_{1,\eps}^2\big),
$$
leading to
\be \label{estF2}
F_{2,\eps}(t) \leq C \big( \eps \eta_0^{-1}E_{2,\eps}(t_\eps^-)+F_{2,\eps}(t_\eps^-)+ \eps \eta_0^{-1} M_{1,\eps}^2\big)=:(M_{4,\eps})^2.
\ee

\paragraph{Step 2.} We extend finally \fref{estF2} to $\Rm^d$, and simply need  for this to estimate $S_\eps$ in $H^{-1}(\Rm^d)$. We find
\be \label{Reps2}
 \|S_\eps\|_{H^{-1}} \leq C (1+\eta_\eps)(\|u_\eps\|_{L^2}+\| \Delta u_\eps\|_{L^2}).
 \ee
 Also,
 \be \label{vij}
\| L_1 v^{(ij)} \|_{H^{-1}} \leq C \| b \chi \nabla v^{(ij)}\|_{L^2} \leq C M_{4,\eps}. 
\ee
Combining \fref{Reps2}, \fref{vij}  with Lemma \ref{extend}, and summing over $i,j$, we obtain for $t\in (t_\eps^-,t_\eps^+)$:
\begin{align*}
  \|\Delta u_\eps(t)\|_{L^2}+\|\partial_t \Delta u_\eps(t)\|_{H^{-1}} \leq & \quad C \|\Delta u_\eps(t_\eps^-)\|_{L^2}+ C \|\partial_t \Delta u_\eps(t_\eps^-)\|_{H^{-1}}\\
  &+ C \int_{t_\eps^-}^t (1+\eta_\eps(s)) (M_{1,\eps}+M_{4,\eps}+\| \Delta u_\eps(s) \|_{L^2} )ds.
\end{align*}
Gronwall's  Lemma then yields the estimate, $t\in (t_\eps^-,t_\eps^+)$:
\begin{align} \nonumber
  \|\Delta u_\eps(t)\|_{L^2}+\|\partial_t \Delta u_\eps(t)\|_{H^{-1}} &\\
  \leq C \big(\|\Delta u_\eps(t_\eps^-)\|_{L^2}+&\|\partial_t \Delta u_\eps(t_\eps^-)\|_{H^{-1}}+ M_{1,\eps}+M_{4,\eps} \big).\label{esM5}
\end{align}

\subsection{Conclusion} We have everything needed now to conclude. Collecting \fref{M1}-\fref{Em4}-\fref{esM5}, we find, for  $t\in (t_\eps^-,t_\eps^+)$,
\be \label{fin1}
\|u_\eps(t)\|_{H^2}+\|\partial_t  u_\eps(t)\|_{H^{1}} \leq C \|u_\eps(t_\eps^-)\|_{H^3}+ C \|\partial_t  u_\eps(t_\eps^-)\|_{H^{2}}.
\ee
For $t \leq t_\eps^-$, we have $\eta_\eps(t)=0$ and perturbation has not occured yet. Since, on the one hand, $u_\eps$ and $\partial_t u_\eps$ are continuous in time along with all of their spatial derivatives, and on the other, \fref{WE} propagates the regularity of the initial conditions, we have
\be \label{fin2}
 \|u_\eps(t_\eps^-)\|_{H^3}+ C \|\partial_t  u_\eps(t_\eps^-)\|_{H^{2}} \leq  C\|u_0\|_{H^3}+ C \|u_1\|_{H^{2}}.
\ee
In the same way, we have after the perturbation, for any $t\in (t_\eps^+,\tau)$,
$$
 \|u_\eps(t)\|_{H^2}+ C \|\partial_t  u_\eps(t)\|_{H^{1}} \leq  C_\tau\|u_\eps(t_\eps^+)\|_{H^2}+ C_\tau \|u_1(t_\eps^+)\|_{H^{1}}.
$$
Together with \fref{fin1}-\fref{fin2}, this gives, for any $\tau>0$,
$$
 \|u_\eps\|_{L^\infty(0,\tau,H^2)}+ C \|\partial_t  u_\eps\|_{L^\infty(0,\tau,H^{1})} \leq  C_\tau\|u_0\|_{H^3}+ C_\tau \|u_1\|_{H^{2}}.
 $$
 This proves estimate \fref{unif} for smooth $u_0$, $u_1$. For $u_0$, $u_1$ with the regularity given in \fref{IC}, it suffices to proceed by density and a limiting argument. This concludes the proof of \fref{unif}. We now turn to the remainder term $\bR_\eps$.
 
\paragraph{Estimates on $\bR_\eps$.} Denote by $w_\eps$ the last component of the vector $\bR_\eps$. From the definition of $\bR_\eps$, we verify that is satisfies
$$
\partial_t^2 w_\eps  = a(x) \nabla \cdot \big(b(x) \nabla w_\eps \big)+a(x) \nabla \cdot \big(b(x) \eta_\eps(t) \nabla (u_\eps-U) \big), \qquad (t,x)\quad \textrm{on} \quad \Rm_+ \times \Rm^d,
$$
equipped with vanishing initial conditions. We recall that $u_\eps$ and $U$ denote the perturbed and unperturbed solutions, respectively. Given that $u_\eps$ and $U$ belong to $L^\infty_{\rm{loc}}(\Rm_+, H^3(\Rm^d))$, it follows that the above equation admit a unique solution in $L^\infty_{\rm{loc}}(\Rm_+, H^3(\Rm^d))$ with second order time derivatives in $L^\infty_{\rm{loc}}(\Rm_+, H^1(\Rm^d))$. Applying Lemma \ref{extend}, we obtain, for $t \geq T+\eps/2$,
\be \label{Ew}
  \|w_\eps(t)\|_{L^2}+\|L_0^{-1/2} (a^{-1} \partial_t w_\eps(t)) \|_{L^2} \leq C \int_0^t \eta_\eps(s) \|\nabla (u_\eps-U)(s) \|_{L^2} ds.
  \ee
  This last term is equal to
  $$
  \eta_0 \eps^{-1} \int_{-\eps/2}^{\eps/2} \|\nabla (u_\eps-U)(T+s) \|_{L^2} ds,
  $$
  which we now estimate. 
  Since $u_\eps(t)=U(t)$ for $t\leq T-\eps/2$, we have, for $s \in [-\eps/2,\eps/2]$,
  $$
  \nabla (u_\eps-U)(T+s)=\nabla u_\eps (T+s)-\nabla u_\eps(T+s-\eps)+\nabla U(T+s-\eps)-\nabla U (T+s).
  $$
  Now, we know that $\partial_t \nabla U \in L^\infty_{\rm{loc}}(\Rm_+,L^2(\Rm^d))$, and that $\partial_t \nabla u_\eps$ is uniformly bounded in  $L^\infty(0,\tau,L^2(\Rm^d))$ for all $\tau>0$ according to estimate \fref{unif}. This yields
  \be \label{EE1}
  \|\nabla (u_\eps-U)(T+s)\|_{L^2} \leq C \eps,
  \ee
  and as a consequence, together with \fref{Ew}
  \be \label{Eww}
  \|w_\eps(t)\|_{L^2}+\|L_0^{-1/2} (a^{-1} \partial_t w_\eps(t)) \|_{L^2} \leq C \eps,
  \ee
  for all $t>0$. This gives an estimate for $w_\eps$.

  We turn now to the remaining $d$ components of $\bR_\eps$, denoted $\bW_\eps$.
We verify that $\bW_\eps$ solves
  $$
  b^{-1}\frac{\partial \bW_\eps }{\partial t} + \nabla w_\eps=\chi \eta_\eps \nabla (u_\eps-U),
  $$
  with $\bW_\eps(t=0,x)=0$. Combining \fref{EE1} and \fref{Eww}, we find, for all $t>0$,
  $$
  \|\bW_\eps(t)\|_{H^{-1}}\leq C \int_0^t \big(\| \nabla w_\eps(s) \|_{H^{-1}}+\eta_\eps(s) \| \nabla (U-u_\eps)(s) \|_{H^{-1}} \big) ds \leq C_t\eps.
  $$

  This concludes the proof of Theorem \ref{mainth}.
  
\section{Proof of Theorem \ref{mainth2}} \label{secproof2}

We gather first standard compactness results. Estimate \fref{unif} leads to the existence of $u \in L^\infty_{\rm{loc}}(\Rm_+,H^2(\Rm^d))$ with $\partial_t u \in L^\infty_{\rm{loc}}(\Rm_+,H^1(\Rm^d))$, and of a subsequence $\{u_{\eps_j}\}_{j \in \NN}$ converging to $u$ in $L^\infty_{\rm{loc}}(\Rm_+,H^2(\Rm^d))$ weak-$*$ such that $\partial_t u_{\eps_j} \to \partial_t u$ in $L^\infty_{\rm{loc}}(\Rm_+,H^1(\Rm^d))$ weak-$*$ as $j \to \infty$. A consequence of such regularity for $u$ is that $u \in C^0(\Rm_+,H^1(\Rm^d))$. 

We pass now to the limit in \fref{WE}. Let $\varphi \in C^{\infty}_c(\Rm_+ \times \Rm^d)$. Then
\be \label{weak}
\langle a^{-1}\partial_t u_{\eps_j}, \partial_t \varphi \rangle= - \langle u_{\eps_j}, \nabla  \cdot [b(1+\chi\eta_{\eps_j}\big) \nabla \varphi ] \rangle,
\ee
where
$$
\langle u, v \rangle = \int_{\Rm_+} \int_{\Rm^d} u(t,x) v(t,x) dtdx.
$$
Let also
$$
\langle u, v \rangle_0 = \int_{\Rm_+}\langle u(t,\cdot), v(t,\cdot) \rangle_{H^{-1},H^1} dt.
$$
Since $\eta_{\eps_j}(t) \to \eta_0 \delta(t-T)$ as $j \to \infty$ in the sense of measures, and estimate \fref{unif} implies that $u_{\eps_j} \to u$ strongly in $C^0([0,\tau],L^2(\Omega))$ for any $\tau>0$ and any open bounded set $\Omega$, we deduce from \fref{weak} and the various convergences of $\{u_{\eps_j}\}_{j \in \NN}$ stated above that
\bea \nonumber
\langle a^{-1}\partial_t u, \partial_t \varphi \rangle_0&=&- \langle u , \nabla  \cdot  [b  \nabla \varphi ] \rangle-\eta_0 (u(T,\cdot) , \nabla  \cdot [b \chi  \nabla \varphi(T,\cdot) ] )\\
&=&\langle \nabla u, b \nabla \varphi \rangle+\eta_0 ( \nabla u(T,\cdot), b \chi \nabla \varphi(T,\cdot)).  \label{dis}
\eea
Since $a>0$ is smooth and bounded below and above, this shows in particular that
$$
|\langle \partial_t u, \partial_t \varphi\rangle_0| \leq C \|u\|_{C^0([0,\tau],H^1(\Rm^d))}\|\varphi\|_{C^0(\Rm_+,H^1(\Rm^d))}, \qquad \forall \varphi \in C_c^\infty(\Rm_+\times \Rm^d),
$$
and as a consequence $\partial_t u \in BV_{\rm{loc}}(\Rm_+,H^{-1}(\Rm^d))$. Above, $\tau$ is such that the support in time of $\varphi$ is included in $[0,\tau]$.

The next step is to identify the jump condition at $t=T$. For $k \geq 1$, consider $\varphi_k(t,x)=\chi_k(t)\psi(x)$, with $\psi \in H^1(\Rm^d)$ and $\chi_k$ the continuous function equal to one in $[T-1/k, T+1/k]$, to zero in $[0,T-2/k] \cup [T+2/k,\Rm_+)$, and that is linear in $[T-2/k,T-1/k] \cup [T+1/k,T+2/k]$. Equation \fref{dis} holds for $\varphi=\varphi_k$. The first term in the r.h.s of \fref{dis} with $\varphi=\varphi_k$ goes to zero as $k \to \infty$ by dominated convergence since $\chi_k(t) \to 0$ pointwise for $t \neq T$. The second term in the r.h.s is simply
$$
\eta_0 (\nabla u(T,\cdot), b \chi \nabla \psi).
$$
The term in the l.h.s of \fref{dis} reads
$$
k \int_{T-2/k}^{T-1/k} \langle \partial_t u(s,\cdot),\psi \rangle_{H^{-1},H^1} ds-k \int_{T+1/k}^{T+2/k} \langle \partial_t u(s,\cdot),\psi \rangle_{H^{-1},H^1} ds.
$$
Since $\partial_t u \in BV_{\rm{loc}}(\Rm_+,H^{-1}(\Rm^d))$, we can take the limit $k\to \infty$ above and obtain the expression
$$
\langle a^{-1} \partial_t u(T^-,\cdot),\psi \rangle_{H^{-1},H^1}-\langle a^{-1} \partial_t u(T^+,\cdot),\psi \rangle_{H^{-1},H^1}.
$$
Collecting results, we find
$$
\langle a^{-1}\partial_t u(T^-,\cdot),\psi \rangle_{H^{-1},H^1}-\langle a^{-1}\partial_t u(T^+,\cdot),\psi \rangle_{H^{-1},H^1}=\eta_0 ( \nabla u(T,\cdot), b \chi \nabla \psi),
$$
which is
$$
\partial_t u(T^+,x)= \partial_t u(T^-,x)+\eta_0 a(x) \nabla \cdot [b(x) \chi(x) \nabla u(T,x)] \qquad \textrm{in} \quad H^{-1}(\Rm^d),
$$
and we have obtained the jump condition.

We now show that the wave equation is satisfied almost everywhere. Taking a test function $\varphi$ in \fref{dis} with time support in $(0,T)$, we find from \fref{dis}, since $\partial_t u \in L^\infty_{\rm{loc}}(\Rm_+, H^1(\Rm^1))$,
$$
\langle a^{-1}\partial_t u, \partial_t \varphi \rangle= \langle \nabla u, b \nabla \varphi \rangle.
$$
Since $u \in L^\infty_{\rm{loc}}(\Rm_+,H^2(\Rm^d))$, this shows that $\partial^2_t u \in L^\infty(0,T,H^2(\Rm^d))$. Proceeding in the same way, we find $\partial^2_t u \in L^\infty_{\rm{loc}}(T,+\infty,H^2(\Rm^d))$. As a consequence, the equation
$$
\partial_t^2 u= a \nabla \cdot (b \nabla u)
$$
is satisfied almost everywhere in $(0,T) \times \Rm^d$ and in $(T,\Rm_+) \times \Rm^d$.

We turn finally to the uniqueness of solutions, which is straightforward: by standard uniqueness results for the wave equation, the solution $u$ is unique up to any time $t<T$. In particular, $\partial_t u$ and $\nabla u$ are uniquely defined up to $t<T$. Since $\partial_t u \in BV_{\rm{loc}}(\Rm_+,H^{-1}(\Rm^d))$, it follows that $\partial_t u(t,x)$ admits a limit in $H^{-1}(\Rm^d)$ as $t\to T^-$, and since $\nabla u$ is continuous in time with values in $H^1(\Rm^d)$ as obtained in the beginning of the proof, this shows that the term
$$
\partial_t u(T^-,x)+\eta_0 a(x) \nabla \cdot [b(x) \chi(x) \nabla u(T,x)]
$$
is uniquely defined in $H^{-1}(\Rm^d)$. Standard uniqueness for the wave equation for times $t>T$ with initial conditions $u(T^+,x)=u(T,x)$ and $\partial_t u (T^+,x)$ equal to the expression above yield finally a unique solution for all times $t \in \Rm_+$.

As a conclusion, the entire sequence $\{u_\eps\}$ converges to $u$ since \fref{WE2}-\fref{jump} admits a unique solution. This ends the proof.

\section{Appendix}

We describe here how the wave equation \fref{WE} with time-dependent coefficients arises in applications.

\paragraph{Electromagnetics.} In absence of free charges, the three dimensional Maxwell equations read
$$
\left\{
\begin{array}{l}
\partial_t \bB= - \nabla \times \bE, \qquad \nabla \cdot \bB=0\\
\partial_t \bD= \nabla \times \bH, \qquad \nabla \cdot \bD=0,
\end{array}
\right.
$$
augmented with the constitutive relations
$$
\bD(t,\bx)=\epsilon(t,\bx) \bE(t,\bx), \qquad \bB(t,\bx)=\mu(t,\bx) \bH(t,\bx),
$$
where $\bx=(x,y,z)$. The coefficients $\eps(t,x)$ and $\mu(t,x)$, supposed to be scalars here, are the permittivity and permeability of the underlying isotropic material of propagation, respectively. It is assumed that the dispersive effects of the material are neglected, and therefore that the relationship between $\bD$ and $\bE$, as well as between $\bB$ and $\bH$ is local in time. It is explained in \cite{lurie2007introduction} how time-dependent $\eps(t,x)$ and $\mu(t,x)$ can be engineered in applications.

Consider first the following transverse magnetic case where
$$
\bE(t,x,y)=
\begin{pmatrix}
 0 \\
 0 \\
  E_z(t,x,y)
\end{pmatrix},
\qquad
\bB(t,x,y)=
\begin{pmatrix}
  B_x(t,x,y)\\
  B_y(t,x,y)\\
  0
\end{pmatrix}
$$
Maxwell's equations then reduce to
$$
\left\{
  \begin{array}{l}
    \partial_t B_x = -\partial_y E_z\\
\partial_t B_y = \partial_x E_z\\
\partial_t \big(\eps E_z \big)= \partial_x \big(\mu^{-1} B_y\big)- \partial_y \big(\mu^{-1} B_x\big).
\end{array}
\right.
$$
According to the divergence free condition on $\bB$, there exists a $u$ such that $B_x=\partial_y u,$ and  $B_y=-\partial_y u$. Setting $E_z=\partial_t u$, we obtain the wave equation
$$
\partial_t \big( \epsilon (t,x) \partial_t u\big)= \nabla \cdot (\mu^{-1}(t,x) \nabla u).
$$
In the context of time reversal and ITM, it is shown in \cite{BFP} that there is no refocusing when both coefficients $\epsilon (t,x)$ and $\mu(t,x)$ are time singular. This is because the wave equation does not admit smooth solutions with respect to the time variable in this case, and the quality of refocusing can be quantified in terms of $\partial_t u$ as we have seen in Theorem \ref{mainth}. Supposing therefore that $\epsilon(t,x)$ is independent of time, we recover \fref{WE}. 

In the transverse electric case, we have,
$$
\bB(t,x,y)=
\begin{pmatrix}
 0 \\
 0 \\
  B_z(t,x,y)
\end{pmatrix}
\qquad
\bD(t,x,y)=
\begin{pmatrix}
  D_x(t,x,y)\\
  D_y(t,x,y)\\
  0
\end{pmatrix}
$$
and obtain the wave equation
$$
\partial_t \big( \mu (t,x) \partial_t u\big)= \nabla \cdot (\epsilon^{-1}(t,x) \nabla u),
$$
where $B_z=\partial_t u$, $D_x=-\partial_y u$,  and $D_y=\partial_y u$. We recover now \fref{WE} when the permeability $\mu$ is independent of time.

\paragraph{Elasticity.} In the case of an isotropic material with negligible shear modulus (i.e. a non-rigid material), the Navier-Cauchy equations reduce to
$$
\partial_t \big(\rho(t,x) \partial_t \bu \big)= \nabla \big( \lambda(t,x) \nabla \cdot \bu\big),
$$
where $\bu \in \Rm^3$ is the displacement field, $\rho$ is the mass density, and $\lambda$ the first Lam\'e coefficient. Again, one can find in \cite{lurie2007introduction} examples of mechanical systems with time-dependent $\rho$ and $\lambda$.

Set $\bw= \rho \partial_t \bu$. If $\bw$ is irrotational at $t=0$, then $\bw$ remains irrotational because of the above equation, and we can write $\bw(t, \bx)= \nabla \phi(t,\bx)$. With $p=- \lambda \nabla \cdot \bu$, we obtain the system
$$
\left\{
\begin{array}{l}
\partial_t \bw= \nabla p \\
\partial_t \big(\lambda^{-1} p \big)= -\nabla \big(\rho^{-1} \bw \big).
\end{array}
\right.
$$
Defining $p=-\partial_t \phi$, we find the wave equation
$$
\partial_t \big(\lambda^{-1} \partial_t \phi \big)= \nabla \big(\rho^{-1} \nabla \phi \big).
$$
We then recover \fref{WE} when $\lambda^{-1}$ does not depend on time.

\paragraph{Fluids.} In the seminal work \cite{Fink-nphys}, waves at the surface of a water tank are modeled by the following wave equation, neglecting surface tension and therefore dispersive effects,

$$
\partial^2_t u= c_0^2 (1+\alpha \delta(t-T)) \Delta u, \qquad \textrm{in } \Rm^2. 
$$
Above, $u$ is the surface height, $c_0$ is the (constant) background velocity, and $\alpha \delta(t-T)$ represents the action of the ITM at $t=T$. This wave equation can be derived from the Euler system, see e.g. \cite{ursell}.

Note that the wave equation \fref{WE} with time singular coefficients seems more difficult to justify physically in the context of sound waves. Indeed, while it is possible to derive some type of wave equations with time-dependent coefficients for sound waves, see \cite{MARTIN}, these models are justified for slowly varying (in time) coefficients, which is certainly not the case for an ITM. 

  \bibliographystyle{plain}
\bibliography{bibliography.bib}
\end{document}